\numberwithin{equation}{section}
\renewcommand{\epsilon}{\varepsilon}
\newcommand{\Rr}{\mathbb{R}}
\newcommand{\Nn}{\mathbb{N}}
\DeclareMathOperator{\dom}{dom}
\theoremstyle{plain}
\newtheorem{theo}{Theorem}[section]
\newtheorem{lem}[theo]{Lemma}
\newtheorem{prop}[theo]{Proposition}
\newtheorem{coro}[theo]{Corollary}
\theoremstyle{definition}
\newtheorem{rem}[theo]{Remark}
\newtheorem{example}[theo]{Example}
\begin{document}

\author[M. Pierre]{Morgan Pierre}
\title[Maximum time step for the BDF3 scheme]{Maximum time step for the BDF3 scheme applied to gradient flows}

\begin{abstract}
For backward differentiation formulae (BDF) applied to gradient flows of semiconvex functions, qua\-dra\-tic sta\-bi\-li\-ty implies the existence of a Lyapunov functional. 
We compute the maximum time step which can be derived from  quadratic stability for the 3-step BDF method (BDF3).  Applications to the asymptotic behaviour of sequences generated by the BDF3 scheme are given. 
\end{abstract}

\address{Morgan Pierre, Laboratoire de Math\'ematiques et Applications, Universit\'e de Poi\-tiers, CNRS, F-86073  Poitiers, France.}
\email{morgan.pierre@math.univ-poitiers.fr}

\maketitle
\noindent
{\bf Keywords:}  gradient system, BDF method, semiconvex function, Kur\-dy\-ka--{\L}o\-ja\-sie\-wicz property, multivalued dynamical system.


\section{Introduction}
In this paper, we focus on the 3-step backward differentiation formula (BDF3) applied to the gradient flow of a semiconvex function in finite dimension. It is known that if the time step is small enough, the BDF3 scheme  is a gradient system, which means that  it is possible to  find a Lyapunov function for the discrete-in-time dynamical system~\cite{SH}. 
A fundamental consequence is that  the time discrete model mimics the asymptotic behaviour of the gradient flow.

The construction of the Lyapunov function for the BDF3 scheme involves quadratic forms. A similar  construction is also well-known for the BDF1 and BDF2 methods~\cite{SH}. It was successfully generalized  into the notion of ``quadratic stability'' for BDFk schemes in~\cite{BPA}. In particular, the BDF schemes of order 4 and 5 were  proved to be gradient systems (or ``gradient stable''). BDF methods of order $k\ge 7$ are not zero-stable~\cite{HNW}, so they cannot be gradient systems, but it is still not known whether the BDF6 scheme is a gradient system or not.

For BDFk methods ($1\le k\le 5$) applied to the gradient flow of a convex function, gradient stability holds without any restriction on the time step. However, for a semiconvex function, it is easily seen that a restriction on the time step is required.

Since quadratic stability implies gradient stability, it is interesting  to compute the maximum time step which can be obtained from quadratic stability. This is easily obtained for the BDF1 and BDF2 schemes~\cite{BPA}. In this paper, we compute the maximum time step for the BDF3 scheme. Surprisingly, this  allows the dynamical system associated to the BDF3 scheme to be multivalued {\it and} gradient stable if the time step is near the optimal value.

Using the gradient stability, we are then able to prove that a bounded sequence generated by the BDF3 scheme converges to a single equilibrium for a large class of functions. For this purpose, we apply a general result on descent methods due to Attouch, Bolte and Svaiter~\cite{ABS} (see also~\cite{AP,Betal}), and we assume a Kurdyka-{\L}ojasiewicz type condition on the Lyapunov function associated to the scheme. Our regularity assumptions on the nonlinearity are weaker than in the references~\cite{BPA,SH}: our framework includes also non-differentiable functions.

\smallskip


Our paper is organized as follows.  We first compute in Section~\ref{secbeta3} the maximum value for the time step of the BDF3 scheme based on quadratic stability. Then, in Section~\ref{secBDF3}, we prove the gradient stability of the BDF3 scheme for this maximal value and we derive some consequences on the asymptotic behaviour of sequences generated by the BDF3 scheme.

\section{Optimal constant for the quadratic stability of the BDF3 scheme}
\label{secbeta3}
We consider the following quadratic form on $\Rr^3$, which will be related to the BDF3 scheme in Section~\ref{secBDFkqs}:
\begin{equation}
\label{g3bis}
\gamma_3(x_1,x_2,x_3)=\frac{11}{6}x_1^2-\frac{7}{6}x_1x_2+\frac{1}{3}x_1x_3.
\end{equation}
We have 
\begin{prop}
\label{propqs3}
The BDF3 scheme is quadratically stable, namely there exist a positive definite quadratic form $q_3$ on $\Rr^2$ and a positive definite quadratic form $r_3$ on $\Rr^3$ such that 
\begin{equation}
\label{qs3}
\gamma_3(x_1,x_2,x_3)=q_3(x_1,x_2)-q_3(x_2,x_3)+r_3(x_1,x_2,x_3),
\end{equation}
 for all $(x_1,x_2,x_3)\in\Rr^3$.
\end{prop}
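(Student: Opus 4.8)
The plan is to reduce the statement to an elementary algebraic problem and then exhibit the two forms explicitly, exactly in the spirit of the classical construction for the BDF1 and BDF2 schemes and of the general BDF$k$ framework of~\cite{BPA,SH}. Write the generic quadratic form on $\Rr^2$ as $q_3(a,b)=Aa^{2}+Bab+Cb^{2}$ with undetermined real coefficients $A,B,C$, and \emph{define}
\begin{equation*}
r_3:=\gamma_3-q_3(x_1,x_2)+q_3(x_2,x_3);
\end{equation*}
then~\eqref{qs3} holds identically, whatever the values of $A,B,C$. Expanding, $r_3$ is the quadratic form associated with the symmetric matrix
\begin{equation*}
M_{r_3}=\begin{pmatrix}
\dfrac{11}{6}-A & -\dfrac12\Bigl(\dfrac76+B\Bigr) & \dfrac16\\[8pt]
-\dfrac12\Bigl(\dfrac76+B\Bigr) & A-C & \dfrac{B}{2}\\[8pt]
\dfrac16 & \dfrac{B}{2} & C
\end{pmatrix},
\end{equation*}
the off-diagonal entries being, as usual, one half of the corresponding mixed coefficients. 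Since the identity is now automatic, the proposition reduces to choosing $A,B,C$ so that $q_3$ and $r_3$ are both positive definite.

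Inspecting the diagonal of $M_{r_3}$ shows that any admissible triple must satisfy $0<C<A<\tfrac{11}{6}$. A convenient normalization is $B=0$: then $q_3(a,b)=Aa^{2}+Cb^{2}$ is positive definite as soon as $A,C>0$, and it is enough to pick $0<C<A<\tfrac{11}{6}$ for which the three leading principal minors of $M_{r_3}$,
\begin{equation*}
\tfrac{11}{6}-A,\qquad \Bigl(\tfrac{11}{6}-A\Bigr)(A-C)-\tfrac{49}{144},\qquad \det M_{r_3},
\end{equation*}
are all positive (Sylvester's criterion). One checks that this region is non-empty; for instance $A=\tfrac75$, $C=\tfrac3{10}$ works, the resulting form being
\begin{equation*}
r_3(x_1,x_2,x_3)=\tfrac{13}{30}x_1^{2}+\tfrac{11}{10}x_2^{2}+\tfrac3{10}x_3^{2}-\tfrac76x_1x_2+\tfrac13x_1x_3,
\end{equation*}
whose leading principal minors $\tfrac{13}{30}$, $\tfrac{491}{3600}$, $\tfrac{373}{36000}$ are all positive, while $q_3(a,b)=\tfrac75a^{2}+\tfrac3{10}b^{2}$ is trivially positive definite. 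This completes the argument.

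The only genuinely delicate point is the choice of the constants: the feasible set in $(A,B,C)$ is rather thin, the determinant condition on $r_3$ being the binding one, so a little care is needed to land inside it. One may instead bypass Sylvester's criterion by writing down an explicit sum-of-squares representation of $r_3$ for the chosen constants, or simply argue that the feasible set is open and, by a short limiting argument, non-empty. In every case the verification is a routine finite computation requiring no further idea.
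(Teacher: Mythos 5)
Your proof is correct: the identity holds by construction, the matrix $M_{r_3}$ is computed correctly, and for $A=\tfrac75$, $B=0$, $C=\tfrac{3}{10}$ the leading principal minors $\tfrac{13}{30}$, $\tfrac{491}{3600}$, $\tfrac{373}{36000}$ are indeed all positive, so both forms are positive definite. This is essentially the same approach as the paper, which simply exhibits a (different) explicit admissible pair taken from \cite{SH,BPA}, namely $q_3(x_1,x_2)=\tfrac{5}{12}x_1^2+\tfrac16(x_1-x_2)^2$ with $r_3$ written directly as a sum of squares of independent linear forms.
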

\begin{proof}
It is easy to check (see~\cite[p. 423]{SH} or~\cite[Formula (17)]{BPA}) that~\eqref{qs3} holds with
$q_3(x_1,x_2)=\cfrac{5}{12}x_1^2+\cfrac{1}{6}(x_1-x_2)^2$
and 
\begin{equation}
\label{qs3bis}
r_3(x_1,x_2,x_3)=\frac{5}{6}x_1^2+\frac{1}{4}(x_1-x_2)^2+\frac{1}{6}(x_1-x_2+x_3)^2.
\end{equation}
\end{proof}

We note that the quadratic forms $q_3$ and $r_3$ in~\eqref{qs3} are {\it not} uniquely defined.  
Indeed, if $q_3$ and $r_3$ are positive definite forms which satisfy~\eqref{qs3}, then for a quadratic form $q_3^\epsilon$ close enough to $q_3$, $q_3^\epsilon$ is positive definite  and the quadratic form $r_3^\epsilon$ defined by~\eqref{qs3} is also positive definite (use for instance Sylvester's criterion).

We define $\beta_3$ as the supremum of real numbers $\beta>0$ such that 
\begin{equation}
\label{defbeta3}
\gamma_3(x_1,x_2,x_3)=q_3(x_1,x_2)-q_3(x_2,x_3)+\tilde{r}_3(x_1,x_2,x_3)+\beta x_1^2
\end{equation}
 where  $q_3$ is a positive definite quadratic form on $\Rr^2$ and $\tilde{r}_3$ is a positive definite quadratic form on $\Rr^3$. Formula~\eqref{qs3bis} shows that $\beta_3\ge 5/6$. The remainder of this section is devoted to the proof of the following
\begin{theo}
\label{theobeta3}
We have $\beta_3=95/96$ and 
$$\gamma_3(x_1,x_2,x_3)=q_3^\star(x_1,x_2)-q_3^\star(x_2,x_3)+\tilde{r}_3^\star(x_1,x_2,x_3)+\beta_3 x_1^2\mbox{ with }$$ 
$q_3^\star(x_1,x_2)=\cfrac{1}{6}(x_2-\cfrac{7}{4}x_1)^2+\cfrac{1}{6}x_1^2$ and $\tilde{r}_3^\star(x_1,x_2,x_3)=\cfrac{1}{6}(x_3-\cfrac{7}{4}x_2+x_1)^2$.
\end{theo}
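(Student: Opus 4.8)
The plan is to prove the two bounds $\beta_3\ge 95/96$ and $\beta_3\le 95/96$ separately; the identity displayed in the theorem will appear as the extremal (degenerate) case of the second bound, and is checked directly — substituting the given $q_3^\star$, $\tilde r_3^\star$ and the value $95/96$ into the right-hand side and expanding matches the coefficients of $\gamma_3$, with $q_3^\star(x_1,x_2)=\tfrac16(x_2-\tfrac74 x_1)^2+\tfrac16 x_1^2$ positive definite but $\tilde r_3^\star(x_1,x_2,x_3)=\tfrac16(x_3-\tfrac74 x_2+x_1)^2$ only positive semidefinite (rank one), so the identity alone exhibits $95/96$ as a limiting value.

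For $\beta_3\ge 95/96$, fix $\beta$ with $0<\beta<95/96$, put $\delta=95/96-\beta$ and pick $s\in(0,\delta)$. Replacing $q_3^\star$ by $q_3:=q_3^\star+s\,x_1^2$ (as a form in two variables) shifts the telescoped difference by $q_3(x_1,x_2)-q_3(x_2,x_3)=q_3^\star(x_1,x_2)-q_3^\star(x_2,x_3)+s x_1^2-s x_2^2$, so the extremal identity becomes $\gamma_3(x_1,x_2,x_3)=q_3(x_1,x_2)-q_3(x_2,x_3)+\tilde r_3(x_1,x_2,x_3)+\beta x_1^2$ with $\tilde r_3:=\tilde r_3^\star+(\delta-s)x_1^2+s x_2^2$. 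Now $q_3=\tfrac16(x_2-\tfrac74 x_1)^2+(\tfrac16+s)x_1^2$ and $\tilde r_3=\tfrac16(x_3-\tfrac74 x_2+x_1)^2+(\delta-s)x_1^2+s x_2^2$ are sums of squares whose only common zero is the origin, hence positive definite, so $\beta$ satisfies~\eqref{defbeta3} and $\beta_3\ge 95/96$.

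For $\beta_3\le 95/96$, suppose $\beta$ satisfies~\eqref{defbeta3} with $q_3,\tilde r_3$ positive definite. Given $N\ge1$ and reals $u_0,\dots,u_{N+1}$, apply~\eqref{defbeta3} at $(x_1,x_2,x_3)=(u_{n+1},u_n,u_{n-1})$ for $n=1,\dots,N$ and sum; the $q_3$-terms telescope, so
\[
\sum_{n=1}^N\gamma_3(u_{n+1},u_n,u_{n-1})=q_3(u_{N+1},u_N)-q_3(u_1,u_0)+\sum_{n=1}^N\tilde r_3(u_{n+1},u_n,u_{n-1})+\beta\sum_{n=1}^N u_{n+1}^2.
\]
Taking $u_0=u_1=0$ and discarding the nonnegative terms $q_3(u_{N+1},u_N)$ and $\sum_n\tilde r_3(\dots)$ leaves $\sum_{n=1}^N\gamma_3(u_{n+1},u_n,u_{n-1})\ge\beta\sum_{n=1}^N u_{n+1}^2$ for all such $u$. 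The left side equals $w^\top\mathcal A_N w$ with $w=(u_2,\dots,u_{N+1})$ and $\mathcal A_N$ the $N\times N$ symmetric banded Toeplitz matrix with diagonal $\tfrac{11}{6}$, first off-diagonals $-\tfrac{7}{12}$ and second off-diagonals $\tfrac16$; hence $\beta\le\lambda_{\min}(\mathcal A_N)$ for every $N$. Its symbol is $f(\theta)=\tfrac{11}{6}-\tfrac76\cos\theta+\tfrac13\cos 2\theta$: extending $w$ by zero to $\ell^2(\mathbb Z)$ and using the Laurent operator with symbol $f$ gives $\lambda_{\min}(\mathcal A_N)\ge\min_\theta f(\theta)$, while the discrete plane wave $w_j=\cos(j\theta_0)$ with $\cos\theta_0=7/8$ has Rayleigh quotient converging to $f(\theta_0)$, so $\lambda_{\min}(\mathcal A_N)\to\min_\theta f(\theta)$ as $N\to\infty$. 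Writing $c=\cos\theta$ one has $f=\tfrac23 c^2-\tfrac76 c+\tfrac32$, minimal on $[-1,1]$ at $c=7/8$ with value $95/96$; hence $\beta\le 95/96$, and $\beta_3\le 95/96$. The routine verification and the perturbation step are easy; the crux is this upper bound — recognising that the telescoped necessary condition is governed by $\lambda_{\min}(\mathcal A_N)$, hence by $\min_\theta f(\theta)$, equivalently that $q_3^\star,\tilde r_3^\star$ realise the degenerate extremal case of that inequality asymptotically as $N\to\infty$.
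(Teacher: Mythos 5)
Your proof is correct, but the upper bound is obtained by a genuinely different route from the paper's. The paper parametrizes \emph{all} admissible decompositions: it writes an arbitrary positive definite $q_3$ via a Cholesky factorization, Gauss-reduces the residual $r_3$, and thereby reduces $\beta_3$ to the supremum of the explicit function $f(a,b,c)$ in~\eqref{deff} over $\{a>0,\ b>a\}$; a critical-point analysis shows the supremum $95/96$ is attained only in the limit at the boundary point $(1/\sqrt6,1/\sqrt6,-7/(4\sqrt6))$, which produces the extremal forms $q_3^\star,\tilde r_3^\star$ and explains the degeneracy of $\tilde r_3^\star$ (and, as a bonus, the uniqueness of the maximizing sequence in Lemma~\ref{lemg3}). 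Your lower bound is in the same spirit as the paper's remark following Proposition~\ref{propqs3} (redistributing $\delta x_1^2$ between $q_3^\star$ and $\tilde r_3^\star$ to restore positive definiteness), and your verification of the identity and of the positive definiteness of the perturbed forms checks out. Your upper bound, by contrast, bypasses the parametrization entirely: telescoping~\eqref{defbeta3} along a sliding window of a sequence with zero initial data shows that any admissible $\beta$ (in fact, any $\beta$ with merely positive \emph{semi}definite $q_3,\tilde r_3$) must bound from below the banded Toeplitz forms $\mathcal A_N$, whose minimal eigenvalues tend to the minimum $95/96$ of the symbol $\tfrac{11}{6}-\tfrac76\cos\theta+\tfrac13\cos2\theta$ at $\cos\theta_0=7/8$; the Laurent-operator lower bound and the truncated plane-wave test vectors are both standard and correctly deployed. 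What your approach buys is a conceptual identification of $\beta_3$ with the minimum of the BDF3 transfer symbol on the unit circle, a necessary condition that would transfer verbatim to BDF$k$ for other $k$ as an upper bound on $\beta_k$; what it does not give is the paper's structural information about \emph{which} decompositions approach the optimum. Note also that your extremal vector $w_j=\cos(j\theta_0)$ is the sequence analogue of the direction in which $\tilde r_3^\star$ degenerates ($x_3-\tfrac74x_2+x_1=0$ is the recursion $u_{n-1}-\tfrac74u_n+u_{n+1}=0$ with characteristic roots $e^{\pm i\theta_0}$), so the two pictures are consistent.
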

We note that $\tilde{r}_3^\star$ is positive semidefinite, but not positive definite.  
\begin{proof}Let $q_3$ be a positive definite quadratic form on $\Rr^2$. Then $(x_1,x_2)\mapsto q_3(x_2,x_1)$ is also positive definite, and using its Cholesky decomposition, we obtain that  
$$q_3(x_1,x_2)=a^2x_2^2+2acx_2x_1+(b^2+c^2)x_1^2$$ 
for some unique real numbers $a>0$, $b>0$ and $c\in\Rr$. Thus, 
$$r_3(x_1,x_2,x_3)=\gamma_3(x_1,x_2,x_3)-q_3(x_1,x_2)+q_3(x_2,x_3)$$
 reads 
\begin{eqnarray*}
r_3(x_1,x_2,x_3)&=&\frac{11}{6}x_1^2-\frac{7}{6}x_1x_2+\frac{1}{3}x_1x_3-(ax_2^2+2acx_2x_1+(b^2+c^2)x_1^2)\\
&&+(ax_3^2+2acx_3x_2+(b^2+c^2)x_2^2).
\end{eqnarray*}
Next, we perform a Gauss reduction  of $r_3$. We obtain
\begin{eqnarray}
r_3(x_1,x_2,x_3)&=&(ax_3+cx_2+\frac{1}{6a}x_1)^2\nonumber\\
&&+(b^2-a^2)\left(x_2-\frac{1}{2(b^2-a^2)}\left(\frac{7}{6}+2ac+\frac{c}{3a}\right)x_1\right)^2\nonumber\\
&&+f(a,b,c)x_1^2,\label{r3abc}
\end{eqnarray}
where 
\begin{equation}
\label{deff}
f(a,b,c)=\frac{11}{6}-\frac{1}{4(b^2-a^2)}\left(\frac{7}{6}+2ac+\frac{c}{3a}\right)^2-\left(b^2+c^2+\frac{1}{36a^2}\right).
\end{equation}
Thus, $r_3$ is positive definite if and only if $a>0$, $b^2-a^2>0$ and $f(a,b,c)>0$. Moreover, it is clear from~\eqref{r3abc} that $\beta_3$ is the supremum $f(a,b,c)$ over the set of real numbers such that $a>0$,  $b>a$ and $c\in\Rr$.  In Lemma~\ref{lemg3}, we show that this supremum is equal to $95/96$. 
\end{proof}

\begin{lem}
\label{lemg3}
Let $\Omega=\{(a,b,c)\in\Rr^3\ :\ a>0\mbox{ and }b>a\}$. Then 
$$\beta_3=\sup_\Omega f=\cfrac{95}{96}.$$
 Moreover, for any sequence $(a_n,b_n,c_n)$ in $\Omega$ such that $f(a_n,b_n,c_n)\to \beta_3$, we have 
$(a_n,b_n,c_n)\to (\cfrac{1}{\sqrt{6}},\cfrac{1}{\sqrt{6}},-\cfrac{7}{4\sqrt{6}})$.
\end{lem}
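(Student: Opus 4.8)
The plan is to maximize $f$ by successive partial maximizations --- first over $b$, then over $c$, then over $a$ --- recording at each stage the gap between $f$ and the stage-optimal value as a manifestly nonnegative remainder. Assembling these produces an identity
\[
\frac{95}{96}-f(a,b,c)=R_1(a)+R_2(a,c)+R_3(a,b,c)\qquad\text{on }\Omega,
\]
with $R_1,R_2,R_3\ge0$, from which both $\sup_\Omega f=95/96$ and the convergence of maximizing sequences follow at once.

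To eliminate $b$, set $t=b^2-a^2>0$ and $K=\bigl(\tfrac76+2ac+\tfrac{c}{3a}\bigr)^2$, so that \eqref{deff} reads $f=\tfrac{11}{6}-\tfrac{K}{4t}-t-a^2-c^2-\tfrac1{36a^2}$; the elementary identity $\tfrac{K}{4t}+t=\bigl(\tfrac{\sqrt K}{2\sqrt t}-\sqrt t\bigr)^2+\sqrt K$ peels off the square $R_3=\bigl(\tfrac{\sqrt K}{2\sqrt t}-\sqrt t\bigr)^2$ and leaves $\tfrac{11}{6}-\sqrt K-a^2-c^2-\tfrac1{36a^2}$, where $\sqrt K=\bigl|\tfrac76+cu\bigr|$ with $u=u(a)=2a+\tfrac1{3a}$. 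To eliminate $c$, the decisive point is that $u(a)\ge2\sqrt{2/3}=\sqrt{8/3}>\sqrt{7/3}$ for every $a>0$ (AM--GM), which makes the inequality $\bigl|\tfrac76+cu\bigr|+c^2\ge\tfrac{49}{36u^2}$ hold for all $c\in\Rr$ --- writing $x=\tfrac76+cu$ this reads $u^2|x|+x\bigl(x-\tfrac73\bigr)\ge0$, immediate from $u^2>\tfrac73$ --- with equality iff $c=-\tfrac7{6u}$. So $R_2=\bigl|\tfrac76+cu\bigr|+c^2-\tfrac{49}{36u^2}\ge0$ peels off next, and using $9a^2u(a)^2=(6a^2+1)^2$ what remains is $G(a^2)$, where $G(s)=\tfrac{11}{6}-s-\tfrac1{36s}-\tfrac{49s}{4(6s+1)^2}$; set $R_1=\tfrac{95}{96}-G(a^2)$.

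It thus remains to prove $G(s)\le\tfrac{95}{96}$ on $(0,\infty)$, with equality only at $s=\tfrac16$. Clearing denominators by multiplying the inequality $\tfrac{95}{96}-G(s)\ge0$ by the positive quantity $288\,s(6s+1)^2$ turns it into $Q(s)\ge0$, where $Q(s)=10368s^4-5292s^3+1188s^2-147s+8$, and one checks the factorization
\[
Q(s)=(6s-1)^2\bigl(288s^2-51s+8\bigr),
\]
whose quadratic factor is strictly positive because its discriminant $51^2-4\cdot288\cdot8=-6615$ is negative. Hence $\beta_3=\sup_\Omega f\le\tfrac{95}{96}$; and at $a=1/\sqrt6$, $c=-7/(4\sqrt6)$ one has $R_1=R_2=0$ and $R_3=b^2-\tfrac16\to0$ as $b\downarrow1/\sqrt6$, so $\beta_3=\tfrac{95}{96}$ (the value being attained only in the limit).

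For the convergence statement, if $f(a_n,b_n,c_n)\to\tfrac{95}{96}$ then $R_1,R_2,R_3$ all tend to $0$ along the sequence. Since $G$ is continuous on $(0,\infty)$, tends to $-\infty$ at both endpoints, and attains its maximum only at $\tfrac16$, $R_1\to0$ forces $a_n^2\to\tfrac16$, hence $a_n\to1/\sqrt6$ and $u(a_n)\to4/\sqrt6$; a quantitative form of the second inequality (one has $R_2\ge\bigl|\tfrac76+c_nu_n\bigr|/(3u_n^2)$) then gives $\tfrac76+c_nu_n\to0$, whence $c_n\to-7/(4\sqrt6)$; and since $\sqrt{K_n}=\bigl|\tfrac76+c_nu_n\bigr|\to0$, $R_3\to0$ forces $t_n\to0$, so $b_n^2=a_n^2+t_n\to\tfrac16$ and $b_n\to1/\sqrt6$. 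The routine parts are the Gauss-type manipulations and the expansion of $Q$; the genuine content is twofold --- arranging each partial-maximization remainder as a nonnegative expression, so that the convergence claim becomes automatic, and --- the main obstacle --- recognizing that $G(s)\le\tfrac{95}{96}$ is equivalent to $(6s-1)^2(288s^2-51s+8)\ge0$ with a sign-definite quadratic cofactor.
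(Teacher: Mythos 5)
Your proof is correct, and it takes a genuinely different route from the paper. The paper argues variationally: it introduces the upper semicontinuous envelope $\overline{f}$ on $\overline{\Omega}$, shows that maximizing sequences are precompact with $a^\star\ge 1/\sqrt{66}$, rules out interior critical points by computing $\nabla f$ (a case analysis on $\delta^\star=\pm\eta^\star$ ending in a negative discriminant), concludes that the relaxed maximum sits on the boundary $b=a$ subject to the constraint $\tfrac76+2ac+\tfrac{c}{3a}=0$, and finally maximizes the resulting one-variable function $g$ by factoring $g'$. You instead exhibit an explicit decomposition $\tfrac{95}{96}-f=R_1+R_2+R_3$ with each $R_i\ge 0$ on $\Omega$, obtained by completing the optimization in $b$, then in $c$ (where the key observation $u(a)^2\ge 8/3>7/3$ makes the $c$-inequality hold for all $c$, with equality only at $c=-7/(6u)$), and finishing with the polynomial identity $Q(s)=(6s-1)^2(288s^2-51s+8)$, whose quadratic cofactor is positive definite. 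I checked the decomposition (indeed $f=G(a^2)-R_2-R_3$ with $G(a^2)=g(a)$, the same one-variable function as in the paper), the expansion of $Q$, and the factorization; all are exact, and $G(1/6)=95/96$. Your approach buys three things: it is entirely elementary (no envelope, no compactness extraction, no critical-point case analysis, no symbolic differentiation), it shows non-attainment transparently (equality would force both $K=0$ and $t=0$, incompatible with $t>0$), and the convergence of maximizing sequences follows quantitatively from $R_1,R_2,R_3\to 0$ rather than from uniqueness of the maximizer of $\overline{f}$ plus subsequence extraction. The paper's method is less ad hoc in that it does not require guessing the sum-of-squares certificate, and the same scheme extends in principle to higher-order BDF where an explicit factorization may be harder to find. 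One small point worth spelling out if you write this up: the step from $R_2\to 0$ to $x_n\to 0$ does need the quantitative bound $R_2\ge |x_n|/(3u_n^2)$ you mention (or at least that $u_n$ is bounded), since $R_2=0$ characterizes $x=0$ only pointwise; your parenthetical covers this.
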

\begin{proof}We define $\overline{f}:\overline{\Omega}\to\Rr\cup\{-\infty\}$  as the lowest upper semicontinuous function above $f$ on the closure $\overline{\Omega}$ of $\Omega$. Namely, for each $(a,b,c)\in\overline{\Omega}$, 
$$\overline{f}(a,b,c)=\sup\left\{\limsup_{n\to+\infty} f(a_n,b_n,c_n) \ |\ (a_n,b_n,c_n)\in\Omega,\ (a_n,b_n,c_n)\to (a,b,c)\right\}.$$
Then $\overline{f}$ is upper semicontinuous on $\overline{\Omega}$ and $\sup_{\overline{\Omega}}\overline{f}=\sup_\Omega f$  (see, e.g.,~\cite[Section 1.1.1]{GMSII}).

Now, let $(a_n,b_n,c_n)$ be a sequence in $\Omega$ such that $f(a_n,b_n,c_n)\to \beta_3=\sup_\Omega f$. Since $\beta_3>0$ by Proposition~\ref{propqs3}, for $n$ large enough we have $f(a_n,b_n,c_n)>0$ and since $b_n>a_n$, the definition~\eqref{deff} of $f$  yields $b_n^2+c_n^2<11/6$ and $11/6>1/(36a_n^2)$. In particular, $(a_n)$, $(b_n)$ and $(c_n)$ are bounded and $a_n\ge 1/\sqrt{66}$.   Thus, up to a subsequence, $(a_n,b_n,c_n)$ converges in $\overline{\Omega}$ to a point $(a^\star,b^\star,c^\star)$ such that $b^\star\ge a^\star\ge 1/\sqrt{66}>0$. Since $\overline{f}=f$ on $\Omega$, the definition of $\overline{f}$ yields 
$$\overline{f}(a^\star,b^\star,c^\star)=\sup_\Omega f=\beta_3.$$
Since $a^\star>0$, then $(a^\star,b^\star,c^\star)$ either belongs to $\Omega$ or to its boundary with $b^\star=a^\star>0$. 

We first assume that $(a^\star,b^\star,c^\star)\in\Omega$. Then $\nabla f(a^\star,b^\star,c^\star)=(0,0,0)$. A calculation (with \texttt{Maple}) yields $\nabla f$. We first use that $\cfrac{\partial f}{\partial b}(a^\star,b^\star,c^\star)=0$. Since $\cfrac{\partial f}{\partial b}(a,b,c)=2b\cfrac{\delta^2}{\eta^2}-2b$
with 
$$\delta=\left(\frac{7}{6}+2ac+\frac{c}{3a}\right)\quad\mbox{ and }\quad\eta=2(b^2-a^2),$$
 this yields $\delta^\star=\epsilon\eta^\star$ with $\epsilon\in\{-1,1\}$, at the critical point. Next, we use $\cfrac{\partial f}{\partial c}(a^\star,b^\star,c^\star)=0$ with 
$\cfrac{\partial f}{\partial c}(a,b,c)=-\cfrac{2\delta(a+1/(6a))}{\eta}-2c$. This yields  
\begin{equation}
\label{cstar}
c^\star=-\epsilon \left(a^\star+\frac{1}{6a^\star}\right).
\end{equation}
We plug this into $\delta^\star$ and we use $\eta^\star=\epsilon\delta^\star$. We obtain
$$\eta^\star=\epsilon\frac{7}{6}-2a^{\star 2}-\frac{2}{3}-\frac{1}{18a^{\star 2}}.$$
We must have $\eta^\star>0$, because $b^\star>a^\star$. For $\epsilon=-1$, this is not possible. For $\epsilon=+1$, we find that it is also not possible. Indeed, in the latter case, we have
$$\eta^\star=\frac{1}{2}-2a^{\star 2}-\frac{1}{18a^{\star 2}}=-\frac{1}{18a^{\star 2}}(-9a^{\star 2}+36a^{\star 4}+1),$$
and the discriminant of the equation is $\Delta=9^2-4\times 36<0$, so this quantity $\eta^\star$ is (strictly) negative for all values of $a^\star$.

Thus, the point $(a^\star,b^\star,c^\star)$ necessarily belongs to the boundary of $\Omega$, with $b^\star=a^\star>0$. We compute $\overline{f}$ at $(a^\star,a^\star,c^\star)$. Since $b_n\to a^\star$ with $b_n>a_n$, from the expression~\eqref{deff} of $f$ we see that 
\begin{equation}
\label{defcbis}
\frac{7}{6}+2a^\star c^\star+\frac{c^\star}{3a^\star}=0,
\end{equation}
otherwise we would have $f(a_n,b_n,c_n)\to-\infty$, a contradiction. We note that in the right hand-side of~\eqref{deff}, the second term is nonpositive. 
Thus,  the value $\bar{f}(a^\star,a^\star,c^\star)$, which is  a supremum of $f$, is best reached  by a sequence  which tends to $(a^\star,a^\star,c^\star)$ and satisfies the constraint~\eqref{defcbis}. 
We obtain that 
$$\bar{f}(a^\star,a^\star,c^\star)=\frac{11}{6}-\left(a^{\star 2}+c^{\star 2}+\frac{1}{36a^{\star 2}}\right),$$
with the constraint~\eqref{defcbis}. It remains to solve this constrained optimization problem. From~\eqref{defcbis} we find the value of $c^\star=-7/(6(2a^\star+1/3a^\star))$  in terms of $a^\star$. We introduce the function 
$$g(a)=\frac{11}{6}-a^2-\frac{49}{36\left(2a+\frac{1}{3a}\right)^2}-\frac{1}{36a^2},$$
so that $\overline{f}(a^\star,a^\star,c^\star)=g(a^\star)$. We seek the maximum of $g$ on $(0,+\infty)$. A calculation  yields
$$g'(a)=-\frac{(6a^2-1)(36a^4+33a^2+1)(36a^4-9a^2+1)}{18(6a^2+1)^3a^3}.$$
The only positive root of $g'$ is $a^\star=1/\sqrt{6}$. From the variations of $g$, we see that $a^\star$ is the unique maximum of $g$ on $(0,+\infty)$.  A computation yields $g(a^\star)=95/96$. The uniqueness of the maximizer of $\bar{f}$ implies the convergence of the whole sequence. This concludes the proof. 
\end{proof}

\section{Gradient stability of the BDF3 scheme}
\label{secBDF3}
\subsection{Assumptions} Let $F:\Rr^M\to\Rr\cup\{+\infty\}$ be a proper lower semicontinuous function. We assume that $F$ is {\it semiconvex}, i.e. there exists $c_F\ge 0$ such that  
\begin{equation}
\label{Fsc}
\mbox{the function }\tilde{F}:V\mapsto F(V)+\frac{c_F}{2}\|V\|^2\mbox{ is convex.}
\end{equation}
Here, $\|\cdot\|$ is the euclidean norm and  $\langle\cdot,\cdot\rangle$ will be the scalar product in $\Rr^M$. 

It is easily seen that if~\eqref{Fsc} holds for some constant $c_F\ge 0$, there exists a minimal value $c_F^\star\ge 0$ for which~\eqref{Fsc} is true, and we denote $c_F$ this optimal value. In particular, if $F$ is convex, we have $c_F=0$. 

The domain of $F$ is the convex set 
$$\dom(F)=\{V\in\Rr^M\ :\ F(V)<+\infty\}(\not=\emptyset)$$
 and we assume that 
\begin{equation}
\label{domF}
F\mbox{ is continuous on }\dom(F).
\end{equation}

Finally, we  assume that 
\begin{equation}
\label{Finf}
\inf_{\Rr^M}F>-\infty.
\end{equation}

Since $\tilde{F}$ is convex, we may define its subdifferential~\cite{B,RW} as a set-valued map $\partial\tilde{F}:\Rr^M\rightrightarrows\Rr^M$.
The subdifferential of $F$ is defined accordingly as the set-valued map $\partial F:\Rr^M\rightrightarrows\Rr^M$ through $\partial F(V)=\partial\tilde{F}(V)-c_FV$, that is
\begin{equation}
\label{defdF}
W\in\partial F(V)\iff\forall V'\in\Rr^M,\  F(V')\ge F(V)+\langle W,V'-V\rangle-\frac{c_F}{2}\|V'-V\|^2.
\end{equation}
We note that $\dom(\partial F)\subset \dom(F)$, where 
$$\dom(\partial F)=\{V\in \Rr^M\ :\ \partial F(V)\not=\emptyset\}.$$

\subsection{BDF methods for gradient flows}
First consider the gradient flow 
\begin{equation}
\label{GF}
U'(t)\in-\partial F(U(t)),\quad t\ge 0,
\end{equation}
in $\Rr^M$, where $U:[0,+\infty)\to\Rr^M$ is continuous.  Since $\partial\tilde{F}$ (cf.~\eqref{Fsc}) is a maximal monotone operator in $\Rr^M$, for every $U_0\in \dom(F)$, there exists a unique strong solution $U$ to~\eqref{GF} such that $U(0)=U_0$~\cite{B}. Moreover, $t\mapsto F(U(t))$ is nonincreasing and several consequences on the asymptotic behaviour of $U(t)$ can be derived~\cite{B,HJ}.  

Here, we focus on the time discretization of~\eqref{GF} by backward differentiation formulae (BDF). 
Let $\Delta t>0$ be the time step. The general $k$-step  BDF scheme for~\eqref{GF} is defined  by 
\begin{equation}
\label{BDFk}
\sum_{j=1}^k\frac{1}{j}\partial^jU^{n+k}\in-\Delta t\partial F(U^{n+k}),\quad n\ge 0,
\end{equation}
where, for a sequence $(U^n)_{n\ge 0}$, the backward difference operator $\partial^j$ is defined recursively by $\partial^jU^{n}=\partial^{j-1}(U^n-U^{n-1})$ ($j\ge 2$, $n\ge j$). When $j=1$, we have $\partial U^n=U^n-U^{n-1}$. 

The one-step BDF method is the backward Euler scheme:
\begin{equation}
\label{BDF1}
U^{n+1}-U^n\in-\Delta t\partial F(U^{n+1}),\quad n\ge 0.
\end{equation}
Any solution to the proximal algorithm solves the BDF1 scheme, but the converse is true only if $\Delta t$ is small enough (Proposition~\ref{propexist}). 
The two-step BDF method reads
\begin{equation}
\label{BDF2}
\frac{3}{2}U^{n+2}-2U^{n+1}+\frac{1}{2}U^n\in-\Delta t\partial F(U^{n+2}),\quad n\ge 0,  
\end{equation}
and the three-step BDF method reads
\begin{equation}
\label{BDF3}
\frac{11}{6}U^{n+3}-3 U^{n+2}+\frac{3}{2} U^{n+1}-\frac{1}{3} U^n\in-\Delta t\partial F(U^{n+3}),\quad n\ge 0.
\end{equation}

 If $F\in C^{k+2}(\Rr^M,\Rr)$ ($k\le 6$) and if the initial conditions are well chosen,  the error between the solution $U$ of~\eqref{GF}  and its approximation given by the BDF scheme~\eqref{BDFk} is of order $O(\Delta t^k)$ on finite time intervals~\cite[Theorem 3.5.7]{SH}.

\medskip

Let $k$ be a positive integer. 
We denote
\begin{equation}
\label{alphak}
\alpha_k=\sum_{j=1}^k1/j>0. 
\end{equation}
The assumptions on $F$ imply~\cite{BPA}:
\begin{prop}
\label{propexist}
Let $U^0$, \ldots $U^{k-1}$ be given in $\Rr^M$. For all $\Delta t>0$,  there exists a least one sequence $(U^n)_{n\ge 0}$ in $\Rr^M$ with  initial values $U^0$, \ldots $U^{k-1}$ which complies with~\eqref{BDFk}. If $c_F\Delta t<\alpha_k$, this sequence is unique. 
\end{prop}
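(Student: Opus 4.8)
The plan is to turn the whole problem into a one-step fixed-point question and then settle that single step by a direct variational argument. First I would note that in $\sum_{j=1}^k\frac1j\partial^jU^{n+k}$ the coefficient of $U^{n+k}$ is exactly $\sum_{j=1}^k\frac1j=\alpha_k$, while all the remaining terms involve only $U^n,\ldots,U^{n+k-1}$. Hence, for each $n$, the scheme~\eqref{BDFk} at step $n$ can be rewritten as
$$\alpha_kU^{n+k}+\Delta t\,\partial F(U^{n+k})\ni b_n,$$
where $b_n\in\Rr^M$ is a fixed vector depending (linearly, in a way we never need to make explicit) only on the previous $k$ iterates. By induction on $n$, the proposition then reduces to the following single-step claim: for every $b\in\Rr^M$ and every $\Delta t>0$ the inclusion $\alpha_kV+\Delta t\,\partial F(V)\ni b$ has a solution $V\in\Rr^M$, and this solution is unique as soon as $c_F\Delta t<\alpha_k$.

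For existence I would introduce the functional
$$\Phi(V)=\frac{\alpha_k}{2}\|V\|^2-\langle b,V\rangle+\Delta t\,F(V),\qquad V\in\Rr^M.$$
It is proper (since $\dom(F)\neq\emptyset$), lower semicontinuous, and --- because $\inf_{\Rr^M}F>-\infty$ by~\eqref{Finf} and $\alpha_k>0$ --- coercive; so it attains its infimum at some $V^\star\in\dom(F)$. To read off the Euler inclusion at $V^\star$, I would write $\Phi=\psi-\frac{\Delta t\,c_F}{2}\|\cdot\|^2$ with $\psi(V)=\frac{\alpha_k}{2}\|V\|^2-\langle b,V\rangle+\Delta t\,\tilde F(V)$ convex, proper and lower semicontinuous. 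Expanding $\|V'\|^2-\|V^\star\|^2=\|V'-V^\star\|^2+2\langle V^\star,V'-V^\star\rangle$ and discarding the nonnegative square term, the minimality of $V^\star$ gives $\psi(V')\ge\psi(V^\star)+\langle\Delta t\,c_FV^\star,V'-V^\star\rangle$ for all $V'$, that is $\Delta t\,c_FV^\star\in\partial\psi(V^\star)$. By the Moreau--Rockafellar sum rule (legitimate since $V\mapsto\frac{\alpha_k}{2}\|V\|^2-\langle b,V\rangle$ is finite and continuous on $\Rr^M$), $\partial\psi(V^\star)=\alpha_kV^\star-b+\Delta t\,\partial\tilde F(V^\star)$, and recalling $\partial F(V^\star)=\partial\tilde F(V^\star)-c_FV^\star$ this says precisely $\alpha_kV^\star+\Delta t\,\partial F(V^\star)\ni b$. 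So $V^\star$ provides the sought value of $U^{n+k}$, and no size restriction on $\Delta t$ is needed here.

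For uniqueness under $c_F\Delta t<\alpha_k$ I would argue by monotonicity rather than convexity: if $V_1$ and $V_2$ both solve the inclusion, then $(b-\alpha_kV_i)/\Delta t\in\partial F(V_i)$, hence $(b-\alpha_kV_i)/\Delta t+c_FV_i\in\partial\tilde F(V_i)$; using that $\partial\tilde F$ is monotone and subtracting, one gets $(c_F-\alpha_k/\Delta t)\|V_1-V_2\|^2\ge0$, whence $V_1=V_2$ since the coefficient is strictly negative. A second induction on $n$ then propagates uniqueness to the whole sequence, starting from the prescribed initial data $U^0,\ldots,U^{k-1}$.

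The rewriting of the BDF operator in terms of $U^{n+k}$ and the coercivity and lower semicontinuity of $\Phi$ are routine. The delicate point --- and, I expect, essentially the only one --- is the Fermat step: because $F$ is merely semiconvex, $\Phi$ is not convex, so one cannot at once invoke ``$0\in\partial\Phi(V^\star)$ at a minimizer'' and a subdifferential sum rule for a nonconvex sum. The remedy is precisely the elementary quadratic manipulation above, which transfers the minimality of $V^\star$ onto the \emph{convex} function $\psi$; after that only convex subdifferential calculus is used, and the semiconvex correction $-c_FV$ is reinserted through the very definition~\eqref{defdF} of $\partial F$.
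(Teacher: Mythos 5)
Your argument is correct and complete: the reduction to the single-step inclusion $\alpha_kV+\Delta t\,\partial F(V)\ni b$, the existence via minimization of the coercive functional $\Phi$ (where~\eqref{Finf} is genuinely needed since the scheme must be solvable for \emph{all} $\Delta t>0$), the transfer of the Fermat condition to the convex function $\psi$, and the uniqueness via strong monotonicity under $c_F\Delta t<\alpha_k$ are all sound. The paper itself omits the proof and defers to~\cite{BPA}, where essentially this same standard variational/monotonicity argument is used, so there is nothing to flag.
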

\begin{rem}
\label{rem1BDFk}
If $c_F\Delta t\ge \alpha_k$, then there may be more than one solution to the BDFk scheme, for a given set of initial values (cf. example below).
\end{rem}
\begin{example}
  Let $F:\Rr\to\Rr$ be a function of class $C^\infty$ such that $F''(v)=-\alpha_k$ on $[-1,1]$, $F''(v)\ge -\alpha_k$ on $\Rr$ and $F(v)\to +\infty$ as $|v|\to +\infty$, then $c_F=\alpha_k$ and for $\Delta t=1$, the BDFk scheme~\eqref{BDFk} reads
  $$F'(u_{n+k})+\alpha_ku_{n+k}-l_n=0,\quad n\ge 0,$$
 where $l_n=l_n(u_{n+k-1},\ldots,u_n)$. If $l_n=F'(0)$ then $u_{n+k}$ may take any value in $[-1,1]$.  
\end{example}

\subsection{Quadratic stability of the BDF3 method}
\label{secBDFkqs}
Following the approach in~\cite{BPA,SH}, we multiply the left-hand side of~\eqref{BDFk} by $\partial U^{n+k}$, and we consider the quantity
\begin{equation}
\label{Gk}
\Gamma_k=\sum_{j=1}^k\frac{1}{j}\langle\partial^jU^{n+k},\partial U^{n+k}\rangle.
\end{equation}
For $k=3$, we consider $\Gamma_3$ as a quadratic form depending on the variables $(\partial U^{n+3},\partial U^{n+1},\partial U^{n+1})$. Namely, 
\begin{eqnarray}
\Gamma_3&=&\frac{11}{6} \|\partial U^{n+3}\|^2-\frac{7}{6}\langle\partial U^{n+3},\partial U^{n+2}\rangle+\frac{1}{3}\langle\partial U^{n+3},\partial U^{n+1}\rangle.\label{G3aux}
\end{eqnarray} 

The results from the previous section can be used for $\Gamma_3$. More precisely, if 
$$q(x_1,\ldots,x_d)=\sum_{i=1}^d\sum_{j=1}^da_{ij}x_ix_j$$
 is a quadratic form on $\Rr^d$, associated to the symmetric matrix $A=(a_{ij})_{1\le i,j\le d}$, we define the following quadratic form on $(\Rr^M)^d$:
$$Q(V_1,\ldots,V_d)=\sum_{i=1}^d\sum_{j=1}^da_{ij}\langle V_i,V_j\rangle.$$
Then $Q$ inherits the properties of $q$. In particular, if $q$ is positive definite, then so is $Q$~\cite[Section 3.1]{BPA}. 

By comparing~\eqref{G3aux} to~\eqref{g3bis}, we see that the quadratic form in $(\Rr^M)^3$ associated to $\gamma_3$ is precisely $\Gamma_3$.  Theorem~\ref{theobeta3} shows that for each $\beta<\beta_3=95/96$, there exist positive definite quadratic forms $q_3^\beta$ on $\Rr^2$ and $\tilde{r}_3^\beta$ on $\Rr^3$ such that
$$\gamma_3(x_1,x_2,x_3)=q_3^\beta(x_1,x_2)-q_3^\beta(x_2,x_3)+\tilde{r}_3^\beta(x_1,x_2,x_3)+\beta x_1^2.$$ 
We denote $Q_3^\beta$ and $\tilde{R}_3^\beta$ the corresponding quadratic forms on $(\Rr^M)^2$ and $(\Rr^M)^3$. There are positive definite and we have
\begin{equation}
\label{G3qs}
\Gamma_3(V_1,V_2,V_3)=Q_3^\beta(V_1,V_2)-Q_3^\beta(V_2,V_3)+\tilde{R}_3^\beta(V_1,V_2,V_3)+\beta\|V_1\|^2.
\end{equation}
We note that $Q_3^\beta$ and $\tilde{R}_3^\beta$ depend on $\beta$.

\subsection{Gradient stability of the BDF3 scheme}
We use the positive semidefinite quadratic forms $Q_3^\beta$ and $\tilde{R}_3^\beta$ defined in~\eqref{G3qs}. 
 For $\hat{V}=(V_0,V_1,V_2)\in(\Rr^M)^3$, we define
\begin{equation}
\label{defhF}
\hat{F}_3^\beta(\hat{V})=F(V_0)+\frac{1}{\Delta t}Q_3^\beta(V_1,V_2).
\end{equation}
For a sequence $(U^n)_{n\ge 0}$ in $\Rr^M$, we denote
$$\hat{U}^{n+3}=(U^{n+3},\partial U^{n+3},\partial U^{n+2}),$$
so that 
\begin{equation}
\label{defhFbis}
\hat{F}_3^\beta(\hat{U}^{n+3})=F(U^{n+3})+\frac{1}{\Delta t}Q_3^\beta(\partial U^{n+3},\partial U^{n+2}).
\end{equation}

 By setting $k=3$ in~\eqref{BDFk}, the BDF3 methods reads: for each $n\in\Nn$,  
\begin{equation}
\label{aux3}
\exists W^{n+3}\in\partial F(U^{n+3})\mbox{ such that } W^{n+3}= -\frac{1}{\Delta t}\sum_{j=1}^3\frac{1}{j}\partial^jU^{n+3}.
\end{equation}
The following result shows the gradient stability of the BDF3 scheme.  
\begin{theo}
\label{theoBDF3}
Let $(U^n)$ be a sequence in $\Rr^M$ which complies with the BDF3 scheme~\eqref{BDF3}. Assume that  $c_F\Delta t<2\beta_3$ and  let  $\beta$ be chosen arbitrarily in $[c_F\Delta t/2,\beta_3)$.  Then for each $n\ge 0$ we have
\begin{equation}
\label{aux5}
\hat{F}_3^\beta(\hat{U}^{n+3})+\frac{1}{\Delta t}\tilde{R}_3^\beta(\partial U^{n+3},\partial U^{n+2},\partial U^{n+1})\le \hat{F}_3^\beta(\hat{U}^{n+2}).
\end{equation}
\end{theo}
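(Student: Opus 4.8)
The plan is to run the standard energy argument behind gradient stability of BDF schemes, now feeding in the sharp decomposition of Theorem~\ref{theobeta3}. Fix $n\ge 0$ and a solution $(U^n)$ of \eqref{BDF3}, and choose $\beta\in[c_F\Delta t/2,\beta_3)$; this interval is nonempty precisely because $c_F\Delta t<2\beta_3$, and for such $\beta$ the positive semidefinite forms $Q_3^\beta$, $\tilde{R}_3^\beta$ of \eqref{G3qs} are at our disposal. First I would take the scalar product of the inclusion \eqref{aux3} with the backward difference $\partial U^{n+3}=U^{n+3}-U^{n+2}$. By \eqref{Gk}--\eqref{G3aux}, and via the identification $x_1\leftrightarrow\partial U^{n+3}$, $x_2\leftrightarrow\partial U^{n+2}$, $x_3\leftrightarrow\partial U^{n+1}$ between $\gamma_3$ and $\Gamma_3$, this gives
$$\Gamma_3(\partial U^{n+3},\partial U^{n+2},\partial U^{n+1})=-\Delta t\,\langle W^{n+3},\partial U^{n+3}\rangle,\qquad W^{n+3}\in\partial F(U^{n+3}).$$

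Next I would invoke semiconvexity: applying the subdifferential inequality \eqref{defdF} with $V=U^{n+3}$, $V'=U^{n+2}$, $W=W^{n+3}$, and noting $V'-V=-\partial U^{n+3}$, yields $\langle W^{n+3},\partial U^{n+3}\rangle\ge F(U^{n+3})-F(U^{n+2})-\frac{c_F}{2}\|\partial U^{n+3}\|^2$, hence
$$\Gamma_3(\partial U^{n+3},\partial U^{n+2},\partial U^{n+1})\le\Delta t\big(F(U^{n+2})-F(U^{n+3})\big)+\frac{c_F\Delta t}{2}\|\partial U^{n+3}\|^2.$$
Then I would substitute the quadratic stability identity \eqref{G3qs} on the left, divide by $\Delta t$, and regroup using the definitions \eqref{defhF}--\eqref{defhFbis}: the two copies of $Q_3^\beta$ combine with $F(U^{n+3})$ and $F(U^{n+2})$ to form $\hat{F}_3^\beta(\hat{U}^{n+3})$ and $\hat{F}_3^\beta(\hat{U}^{n+2})$ respectively, leaving on the left the term $\frac{1}{\Delta t}\tilde{R}_3^\beta(\partial U^{n+3},\partial U^{n+2},\partial U^{n+1})$ together with $\frac{1}{\Delta t}\big(\beta-\frac{c_F\Delta t}{2}\big)\|\partial U^{n+3}\|^2$. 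The latter is nonnegative since $\beta\ge c_F\Delta t/2$, so dropping it produces exactly \eqref{aux5}.

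I do not anticipate a genuine obstacle: the whole argument is a one-line pairing of the scheme with $\partial U^{n+3}$, the convexity estimate \eqref{defdF}, and the algebra already carried out in Theorem~\ref{theobeta3} and transferred to $(\Rr^M)^3$ via \eqref{G3qs}. The only point that needs care is index bookkeeping, namely checking that the $Q_3^\beta$ term appearing with a minus sign in \eqref{G3qs}, which is $Q_3^\beta(\partial U^{n+2},\partial U^{n+1})$, is precisely the one entering $\hat{F}_3^\beta(\hat{U}^{n+2})$ through $\hat{U}^{n+2}=(U^{n+2},\partial U^{n+2},\partial U^{n+1})$, so that the telescoping closes as stated.
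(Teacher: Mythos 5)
Your proposal is correct and follows essentially the same route as the paper: pair the scheme with $\partial U^{n+3}$, apply the semiconvexity inequality \eqref{defdF} with $V=U^{n+3}$, $V'=U^{n+2}$, then substitute the decomposition \eqref{G3qs} and absorb the $\big(\beta-\tfrac{c_F\Delta t}{2}\big)\|\partial U^{n+3}\|^2$ term using $\beta\ge c_F\Delta t/2$. The index bookkeeping you flag at the end is also exactly right, so nothing is missing.
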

\begin{proof} Let $n\ge 0$ and  $W\in \partial F(U^{n+3})$. We apply the definition~\eqref{defdF} of $\partial F$ with $V=U^{n+3}$ and $V'=U^{n+2}$. This yields
\begin{equation}
\label{aux1}
F(U^{n+2})\ge F(U^{n+3})-\langle W,\partial U^{n+3}\rangle-\cfrac{c_F}{2}\|\partial U^{n+3}\|^2.
\end{equation}
We choose $W=W^{n+3}$ from the BDF3 scheme~\eqref{aux3} in this estimate and we  use the definition~\eqref{Gk} of $\Gamma_3$.  We obtain
$$F(U^{n+2})\ge F(U^{n+3})+\frac{1}{\Delta t}\Gamma_3(\partial U^{n+3},\partial U^{n+2},\partial U^{n+1})-\cfrac{c_F}{2}\|\partial U^{n+3}\|^2.$$
From~\eqref{G3qs}, ~\eqref{defhFbis} and $\beta\ge c_F\Delta t/2$, we deduce~\eqref{aux5}. 
\end{proof}
If $(U^n)_n$ is a sequence in $\Rr^M$, we denote 
$$\omega((U^n)_n)=\left\{U^\star\in\Rr^M\ :\ \exists n_p\to +\infty\mbox{ such that }U^{n_p}\to U^\star\right\}$$
its $\omega$-limit set. The set of critical points of $F$ is 
$$\mathcal{S}=\left\{V\in \Rr^M\ :\ 0\in\partial F(V)\right\}.$$

As a consequence of gradient stability, we have:
\begin{coro}
\label{coroBDF3}
Let $(U^n)$ be a bounded sequence in $\Rr^M$ which complies with the BDF3 scheme~\eqref{BDF3}. If $c_F\Delta t<2\beta_3$, then $\partial U^n\to 0$ and $\omega((U^n)_n)$ is a compact and connected subset of $\Rr^M$ included in $\mathcal{S}$.
\end{coro}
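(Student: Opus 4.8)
The plan is to use $\hat F_3^\beta$ as a strict Lyapunov function for the scheme and then apply standard arguments for descent sequences. Since $c_F\Delta t<2\beta_3$, fix once and for all a parameter $\beta\in[c_F\Delta t/2,\beta_3)$; by Theorem~\ref{theobeta3} the forms $Q_3^\beta$ and $\tilde R_3^\beta$ of~\eqref{G3qs} can be taken positive definite. Because each $U^{n+3}$ ($n\ge 0$) satisfies~\eqref{aux3}, we have $U^n\in\dom(\partial F)\subset\dom(F)$ for $n\ge 3$, so $F(U^n)\in\Rr$ for such $n$; together with~\eqref{Finf} and $Q_3^\beta\ge 0$, the sequence $\bigl(\hat F_3^\beta(\hat U^n)\bigr)_{n\ge 3}$ is bounded below by $\inf_{\Rr^M}F$. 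By Theorem~\ref{theoBDF3} (used with $n\ge 1$, so that $n+2\ge 3$) this sequence is nonincreasing, hence it converges to some $\ell\in\Rr$.

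First I would sum~\eqref{aux5} over $n=1,\dots,N$, which telescopes to
$$\frac1{\Delta t}\sum_{n=1}^{N}\tilde R_3^\beta\bigl(\partial U^{n+3},\partial U^{n+2},\partial U^{n+1}\bigr)\le \hat F_3^\beta(\hat U^3)-\hat F_3^\beta(\hat U^{N+3})\le \hat F_3^\beta(\hat U^3)-\inf_{\Rr^M}F.$$
Letting $N\to\infty$ shows that the left-hand series converges. As $\tilde R_3^\beta$ is a positive definite quadratic form on $(\Rr^M)^3$, there is $c>0$ with $\tilde R_3^\beta(V_1,V_2,V_3)\ge c(\|V_1\|^2+\|V_2\|^2+\|V_3\|^2)$, hence $\sum_n\|\partial U^n\|^2<+\infty$ and in particular $\partial U^n\to0$. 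Consequently $\partial^2U^n=\partial U^n-\partial U^{n-1}\to0$ and $\partial^3U^n\to0$.

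Next I would study $\omega((U^n)_n)$. It is nonempty because $(U^n)$ is bounded, and closed, hence compact, directly from its definition. For connectedness I would invoke the classical fact that the set of cluster points of a bounded sequence whose increments satisfy $U^{n+1}-U^n\to0$ is connected: otherwise $\omega((U^n)_n)$ would split into two nonempty compact sets lying at a positive distance $\delta$, and since $(U^n)$ enters every neighbourhood of each piece infinitely often while $\|\partial U^n\|<\delta$ eventually, it would accumulate somewhere strictly between the two pieces, contradicting the definition of $\omega((U^n)_n)$. Finally, to get $\omega((U^n)_n)\subset\mathcal S$, take $U^\star\in\omega((U^n)_n)$ and a subsequence $U^{n_p}\to U^\star$. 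By~\eqref{aux3}, $W^{n_p}=-\frac1{\Delta t}\sum_{j=1}^3\frac1j\partial^jU^{n_p}\in\partial F(U^{n_p})$, and $W^{n_p}\to0$ by the previous step, so $W^{n_p}+c_FU^{n_p}\in\partial\tilde F(U^{n_p})$ with $W^{n_p}+c_FU^{n_p}\to c_FU^\star$. Since the graph of the maximal monotone operator $\partial\tilde F$ (cf.~\eqref{Fsc}) is closed in $\Rr^M\times\Rr^M$, we obtain $c_FU^\star\in\partial\tilde F(U^\star)$, that is $0\in\partial F(U^\star)$ by~\eqref{defdF}; hence $U^\star\in\mathcal S$.

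The computational content is minimal. The points needing care are the finiteness and lower boundedness of $\hat F_3^\beta(\hat U^n)$ — for which one uses that the BDF3 relation forces $U^n\in\dom(\partial F)$ from index $3$ on, together with~\eqref{Finf} — and the passage to the limit in $W^{n_p}\in\partial F(U^{n_p})$, which rests on the graph-closedness of $\partial\tilde F$. The one genuinely non-elementary ingredient is the connectedness lemma for $\omega$-limit sets of slowly varying sequences, which I would cite rather than reprove.
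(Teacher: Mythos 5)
Your proof is correct and follows the same overall strategy as the paper: telescoping~\eqref{aux5}, using the positive definiteness of $\tilde{R}_3^\beta$ to get $\partial U^n\to 0$, the standard connectedness argument for $\omega$-limit sets of bounded sequences with vanishing increments, and a limit passage in the differential inclusion. The one place where you genuinely diverge is the proof that $\omega((U^n)_n)\subset\mathcal{S}$: the paper passes to the limit directly in the subdifferential inequality~\eqref{defdF}, which requires first checking $U^\star\in\dom(F)$ (via lower semicontinuity of $F$ and the uniform bound $\hat{F}_3^\beta(\hat{U}^{n_p+3})\le\hat{F}_3^\beta(\hat{U}^3)$) and then invoking the continuity assumption~\eqref{domF} to obtain $F(U^{n_p+3})\to F(U^\star)$. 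You instead use the closedness of the graph of the maximal monotone operator $\partial\tilde{F}$ in $\Rr^M\times\Rr^M$; this is a legitimate classical fact and has the small advantage of not using~\eqref{domF} at this step. Both routes are sound; the only blemish is notational, namely that your indices should read $U^{n_p+3}$ and $W^{n_p+3}$, since the inclusion~\eqref{aux3} concerns indices $\ge 3$.
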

\begin{proof}
Let $\beta\in [c_F\Delta t/2,\beta_3)$.  
By~\eqref{aux5}, the sequence $(\hat{F}_3^\beta(\hat{U}^{n+3}))_n$ is nonincreasing.  It is also bounded from below thanks  to the positivity of $Q_3^\beta $ and assumption~\eqref{Finf}.
By induction, we obtain from~\eqref{aux5} that for all $N\ge 1$, 
\begin{equation}
\label{aux2}
\hat{F}_3^\beta(\hat{U}^{n+3})+\frac{1}{\Delta t}\sum_{n=1}^N\tilde{R}_3^\beta(\partial U^{n+3},\partial U^{n+2},\partial U^{n+1})\le \hat{F}_3^\beta(\hat{U}^3).
\end{equation}
Letting $N\to +\infty$ yields
$$\frac{1}{\Delta t}\sum_{n=1}^{+\infty}\tilde{R}_3^\beta(\partial U^{n+3},\partial U^{n+2},\partial U^{n+1})\le \hat{F}_3^\beta(\hat{U}^3)-\inf F<+\infty.$$
We note here that $\hat{F}_3^\beta(\hat{U}^3)<+\infty$ since $U^3\in \dom(\partial F)\subset \dom(F)$. Since $\tilde{R}_3^\beta$ is positive definite, we have 
$$ (\partial U^{n+3},\partial U^{n+2},\partial U^{n+1})\to (0,0,0).$$
Therefore, the bounded sequence $(U^n)$ satisfies $U^{n+1}-U^n\to 0$ and a standard argument shows that $\omega((U^n)_n)$ is a compact and connected subset of $\Rr^M$. 

Let $U^\star=\lim_{p\to+\infty} U^{n_p}$ be an element of $\omega((U^n)_n)$. Then from~\eqref{aux2} we deduce that $\hat{F}_3^\beta(\hat{U}^{n_p+3})\le \hat{F}_3^\beta(\hat{U}^3)$ and since $F$ is lower semicontinuous, we see that 
$$F(U^\star)\le\liminf_p\hat{F}_3^\beta(\hat{U}^{n_p+3})<+\infty. $$
Thus, $U^\star\in \dom(F)$. 
  Now, since $\partial U^{n+1}\to 0$,  we have $W^{n+3}\to 0$ in~\eqref{aux3}.  
	By letting $n=n_p$ tend to $+\infty$ in~\eqref{aux3}, we obtain $0\in\partial F(U^\star)$. This can easily be seen by choosing $V=U^{n_p+3}$ and $W=W^{n_p+3}$ in~\eqref{defdF}, for an arbitrary $V'\in\Rr^M$, and by letting $p$ tend to $+\infty$. The continuity of $F$ on its domain  yields $F(U^{n_p+3})\to F(U^\star)$.  
\end{proof}
\begin{rem}
\label{remmulti3}
Since $2\beta_3=95/48>\alpha_3=11/6$, if $c_F\Delta t\in [\alpha_3,2\beta_3)$, there may be several  sequences $(U^n)$ which comply with the BDF3 scheme for the same choice of initial conditions. Each one of these sequences is gradient stable.  
The previous known estimate $2\beta_3\ge 5/3 $ did not allow this non-uniqueness phenomenon~\cite{BPA}.  
\end{rem}
\begin{rem}
\label{remcoer}
Assume that $F$ is coercive, i.e. $\lim_{\|V\|\to +\infty}F(V)=+\infty$.
If $(U^n)$ is a sequence in $\Rr^M$ which complies with the BDF3 scheme~\eqref{BDF3} with $c_F\Delta t<2\beta_3$, then by~\eqref{aux2}, the sequence $(F(U^n))_n$ is bounded, so $(U^n)_n$ is bounded as well. 
\end{rem}
In general, the $\omega$-limit set in Corollary~\ref{coroBDF3} is not  reduced to a single point (see~\cite{AMA} for related counter-examples).  However, there are many situations where this happens. In the next result, we use the definition of the {\it Kurdyka-{\L}ojasiewicz (KL) property} as given, e.g., in~\cite{ABS,BCL}. 
 To the class of KL functions belong real analytic, semi-algebraic, real sub-analytic, uniformly convex and convex functions satisfying a growth assumption~\cite{ABRS,BDL,BDLS,BDLM,L}. 
\begin{coro}
\label{coroKL}
Assume that the hypotheses of Corollary~\ref{coroBDF3} are satisfied and let $\beta\in[c_F\Delta t/2,\beta_3)$.  If the function $\hat{F}_3^\beta:(\Rr^M)^3\to \Rr$ has the KL-property at some point $(U^\star,0,0)$ in $(\Rr^M)^3$ where $U^\star\in\omega((U^n)_n)$, then the whole sequence $(U^n)_n$ converges to $U^\star$. 
\end{coro}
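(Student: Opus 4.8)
The plan is to view $(\hat U^{n+3})_{n\ge 0}$ as a descent sequence in the finite-dimensional space $(\Rr^M)^3$ for the Lyapunov function $\hat F_3^\beta$, and to apply the abstract convergence result of Attouch, Bolte and Svaiter (see~\cite{ABS} and the versions in~\cite{AP,Betal}): a sequence satisfying a sufficient-decrease condition, a relative-error condition and a continuity condition converges to any of its accumulation points at which the function enjoys the KL property. First I would record that $\hat F_3^\beta$ is proper, lower semicontinuous, bounded below (by $\inf_{\Rr^M}F$, since $Q_3^\beta\ge 0$), and semiconvex: as $Q_3^\beta$ is positive definite, hence convex, the map $\hat V\mapsto\hat F_3^\beta(\hat V)+\tfrac{c_F}{2}\|V_0\|^2$ is convex. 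Hence the subdifferential~\eqref{defdF} of $\hat F_3^\beta$ agrees with the limiting subdifferential used in the abstract result, and by the separated form~\eqref{defhF} one has $\partial\hat F_3^\beta(V_0,V_1,V_2)=\partial F(V_0)\times\{\tfrac1{\Delta t}\nabla Q_3^\beta(V_1,V_2)\}$, with $\nabla Q_3^\beta$ linear.

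Next I would check the sufficient-decrease condition. Since $\hat U^{n+3}-\hat U^{n+2}=(\partial U^{n+3},\partial^2 U^{n+3},\partial^2 U^{n+2})$ and $\|\partial^2 U^{m}\|^2\le 2\|\partial U^m\|^2+2\|\partial U^{m-1}\|^2$, one obtains $\|\hat U^{n+3}-\hat U^{n+2}\|^2\le 4\bigl(\|\partial U^{n+3}\|^2+\|\partial U^{n+2}\|^2+\|\partial U^{n+1}\|^2\bigr)$. Because $\beta<\beta_3$, $\tilde R_3^\beta$ is positive definite, so $\tilde R_3^\beta(V_1,V_2,V_3)\ge\lambda(\|V_1\|^2+\|V_2\|^2+\|V_3\|^2)$ for some $\lambda>0$; inserting this in~\eqref{aux5} gives $\hat F_3^\beta(\hat U^{n+3})+\tfrac{\lambda}{4\Delta t}\|\hat U^{n+3}-\hat U^{n+2}\|^2\le\hat F_3^\beta(\hat U^{n+2})$ for every $n\ge 0$.

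The only step I expect to require care is the relative-error condition: I claim there exist $w^{n+3}\in\partial\hat F_3^\beta(\hat U^{n+3})$ and $b>0$ with $\|w^{n+3}\|\le b\,\|\hat U^{n+3}-\hat U^{n+2}\|$. For the first block I would take $W^{n+3}\in\partial F(U^{n+3})$ given by the scheme~\eqref{aux3}, i.e. $W^{n+3}=-\tfrac1{\Delta t}\bigl(\partial U^{n+3}+\tfrac12\partial^2 U^{n+3}+\tfrac13\partial^3 U^{n+3}\bigr)$, which is a fixed linear combination of $\partial U^{n+3},\partial U^{n+2},\partial U^{n+1}$; for the remaining two blocks I would take $\tfrac1{\Delta t}\nabla Q_3^\beta(\partial U^{n+3},\partial U^{n+2})$, a fixed linear combination of $\partial U^{n+3}$ and $\partial U^{n+2}$. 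The crucial observation is that all three increments are dominated by the single quantity $\|\hat U^{n+3}-\hat U^{n+2}\|$: one has $\|\partial U^{n+3}\|\le\|\hat U^{n+3}-\hat U^{n+2}\|$ at once, then $\partial U^{n+2}=\partial U^{n+3}-\partial^2 U^{n+3}$ yields $\|\partial U^{n+2}\|\le 2\|\hat U^{n+3}-\hat U^{n+2}\|$, and $\partial U^{n+1}=\partial U^{n+2}-\partial^2 U^{n+2}$ yields $\|\partial U^{n+1}\|\le 3\|\hat U^{n+3}-\hat U^{n+2}\|$, because $\partial^2 U^{n+3}$ and $\partial^2 U^{n+2}$ are precisely the second and third components of $\hat U^{n+3}-\hat U^{n+2}$. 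The desired bound follows. Without this remark one would only get a three-term relative-error estimate, forcing a more general abstract statement.

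Finally I would verify the continuity condition along a subsequence. By Corollary~\ref{coroBDF3}, $\partial U^n\to 0$ and $U^\star\in\mathcal S$; hence $(0,0,0)\in\partial\hat F_3^\beta(U^\star,0,0)$, so $(U^\star,0,0)$ is a critical point of $\hat F_3^\beta$. Pick $n_p\to+\infty$ with $U^{n_p}\to U^\star$ (possible since $U^\star\in\omega((U^n)_n)$); then $\hat U^{n_p+3}\to(U^\star,0,0)$, while the quadratic term of $\hat F_3^\beta(\hat U^{n_p+3})$ tends to $0$ and $F(U^{n_p+3})\to F(U^\star)$ by continuity of $F$ on its domain (as established in the proof of Corollary~\ref{coroBDF3}); therefore $\hat F_3^\beta(\hat U^{n_p+3})\to\hat F_3^\beta(U^\star,0,0)$. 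The three conditions being in place, the abstract result applied at $(U^\star,0,0)$, where $\hat F_3^\beta$ has the KL property, gives $\sum_n\|\hat U^{n+3}-\hat U^{n+2}\|<+\infty$. Hence $(\hat U^{n+3})_n$ is Cauchy, its limit must be $(U^\star,0,0)$ by the subsequence above, and in particular $U^n\to U^\star$.
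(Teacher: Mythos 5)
Your proposal is correct and follows essentially the same route as the paper: both apply the abstract convergence theorem of Attouch--Bolte--Svaiter to the sequence $(\hat{U}^{n+3})_n$ and verify the sufficient-decrease, relative-error and continuity conditions, using the positive definiteness of $\tilde{R}_3^\beta$ for the first, the product structure of $\partial\hat{F}_3^\beta$ together with the scheme~\eqref{aux3} for the second, and Corollary~\ref{coroBDF3} for the third. Your write-up merely makes explicit the elementary estimates relating $\|(\partial U^{n+3},\partial U^{n+2},\partial U^{n+1})\|$ and $\|\hat{U}^{n+3}-\hat{U}^{n+2}\|$ that the paper leaves implicit.
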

\begin{proof}We apply~\cite[Theorem 2.9]{ABS} to the sequence $(\hat{U}^{n+3})_n$ in $(\Rr^M)^3$.  The function $\hat{F}_3^\beta$ is  proper and lower semicontinuous on $(\Rr^M)^3$. It is also semiconvex with constant $c_F$.  We only need to check assumptions  {\bf H1},  {\bf H2} and  {\bf H3} in~\cite{ABS}.
Estimate~\eqref{aux5} shows that {\bf H1} is satisfied.  Corollary~\ref{coroBDF3} shows that for some subsequence, we have $\hat{U}^{n_p+3}\to (U^\star,0,0)$ and 
$$\hat{F}_3^\beta(\hat{U}^{n_p+3})\to \hat{F}_3^\beta(U^\star,0,0)=F(U^\star),$$
 so that {\bf H3} is also satisfied. 
Next, we turn to {\bf H2}. By definition, the positive definite quadratic form $Q_3^\beta$  reads 
$$Q_3^\beta(V_1,V_2)=a\|V_1\|^2+2b\langle V_1,V_2\rangle+c\|V_2\|^2,$$
with $a>0$ and $ac-b^2>0$. Thus,
$$\frac{\partial Q_3^\beta}{\partial V_1}(V_1,V_2)=2aV_1+2bV_2\mbox{ and }\frac{\partial Q_3^\beta}{\partial V_2}(V_1,V_2)=2bV_1+2cV_2.$$
For each $n$, the vector 
$$\hat{W}^{n+3}=(W^{n+3},2a\partial U^{n+3}+2b\partial U^{n+2},2b\partial U^{n+3}+2c\partial U^{n+2})$$
where $W^{n+3}$ solves~\eqref{aux3} belongs to $\partial F_3^\beta(\hat{U}^{n+3})$. Moreover, using~\eqref{aux3} again, we see that  
$$\|\hat{W}^{n+3}\|\le c_1\|(\partial U^{n+3},\partial U^{n+2},\partial U^{n+1})\|\le c_2\|\hat{U}^{n+3}-\hat{U}^{n+2}\|,$$
for some positive constants  $c_1$, $c_2$ independent of $n$. This proves {\bf H2} and concludes the proof. 
\end{proof}
\begin{example}The convergence result of Corollary~\ref{coroKL} can be applied  to the time and space discretization of the Allen-Cahn equation with polynomial nonlinearity considered in~\cite[Section 6]{BPA}. In this case, the function $F$ is polynomial  on $\Rr^M$, so $\hat{F}_3^\beta$ is polynomial as well. Thus, it satisfies the classical {\L}ojasiewciz inequality~\cite{L}.  
\end{example}
\subsection{A barrier to gradient stability}
\label{seclambdak}
We consider a counter-example to gradient stability in the one-dimensional case. 
For  $k\in\{1,2,3\}$, we define $\lambda_k>0$ such that the sequence $u_n=(-1)^n$ solves the BDFk scheme
$$\sum_{j=1}^k\frac{1}{j}\partial^ju^{n+k} =-\lambda_k u^{n+k}.$$
Using~\eqref{BDF1}-\eqref{BDF3}, we see that this holds for $\lambda_1=2$, $\lambda_2=4$ and $\lambda_3=20/3$. 

Let now $\Delta t>0$ and  $ c_{F_k}=\lambda_k/\Delta t$.
It is easy to build a function $F_k$ of class $C^\infty$ on $\Rr$ such that $F_k'(v)=-c_{F_k}v$ on $[-1,1]$, $F_k''(v)\ge -c_{F_k}$ on $\Rr$  and $F_k(v)\to +\infty$ as $|v|\to +\infty$.   This function satisfies assumptions~\eqref{Fsc}-\eqref{Finf}, and by construction the sequence $u_n=(-1)^n$ is bounded and complies with the BDFk scheme~\eqref{BDFk} for $F_k$. However, for this time step, the BDFk scheme does not satisfy the conclusions of Corollary~\ref{coroBDF3}  because  $\pm 1$ are not critical points of $F_k$. Thus, the number $\lambda_k$ appears as a ``barrier'' to the gradient stability of the BDFk scheme.

\smallskip

The results are summarized in Table~\ref{table1}. The values $\beta_1=\beta_2=1$ are easy to find~\cite[Section 3.4]{BPA}. For $k=1$, we have $\lambda_1=2\beta_1=2$, so that quadratic stability gives the optimal time step for gradient stability. For $k\ge 2$, we have $\lambda_k>2\beta_k$, so there is  a gap between quadratic stability and gradient stability.
It could be interesting to understand the asymptotic behaviour of sequences when $c_F\Delta t$ belongs to the interval $[2\beta_k,\lambda_k)$.
  
No restriction on the time step is required if the BDF1 scheme is replaced by the proximal algorithm. Moreover, for the proximal algorithm, the semiconvexity assumption on $F$ can be removed~\cite{AB,MP}. 

\begin{table}[ht] 
\begin{tabular}{|c||c|c|c|}
\hline
$k$&$1$&$2$&$3$\\
\hline
$\alpha_k$&$1$&$3/2$&$11/6$\\
\hline
$2\beta_k$&$2$&$2$&$95/48$\\
\hline
$\lambda_k$&$2$&$4$&$20/3$\\
\hline
\end{tabular}
\caption{\label{table1}Uniqueness and stability numbers for BDFk methods}
\end{table}

\section*{Acknowledgements}
The author is thankful to Fr\'ed\'eric Bosio, Anass Bouchriti and Nour Eddine Alaa for helpful discussions.


\end{document}